\titleformat*{\section}{\small\bfseries}
\newtheorem{theorem}{Theorem}[section]
\newtheorem{lemma}[theorem]{Lemma}
\newtheorem{proposition}[theorem]{Proposition}
\theoremstyle{definition}
\newtheorem{remark}[theorem]{Remark}
\newtheorem{example}[theorem]{Example}
\numberwithin{equation}{section}
\author{\large Anuradha Gupta and Kajal Negi}
\date{}
\begin{document}
\title{HYPONORMALITY  AND QUASINORMALITY OF UNBOUNDED TOEPLITZ OPERATORS WITH NON-HARMONIC SYMBOL ON THE FOCK-SOBOLEV SPACE}
\maketitle

\begin{abstract}
In this paper,  we establish the essential criteria for the hyponormality and quasinormality of the unbounded Toeplitz operator $T_{\varphi}$ with non-harmonic symbol, acting on the  Fock-Sobolev space $F^{2, m}(\mathbb{C})$. The study shows that quasinormality does not inherently imply hyponormality of unbounded Toeplitz operator with non-harmonic symbols.
\end{abstract}

\textbf{Mathematics Subject Classification:} Primary 47B35, Secondary 46E22, 47B20, 47A05.\\

\textbf{Keywords:} Fock-Sobolev space, Hyponormality, Quasinormality, Toeplitz operator.

\section{Introduction}
The Fock space denoted by $\mathbb{F}^2$,  also referred  as the Bargmann space, is a Hilbert space consisting of entire functions defined on the complex plane $\mathbb{C}$ and endowed with the standard Lebesgue measure $dA$. In $\mathbb{F}^2$, the norm of a function $f$ belonging to $\mathbb{F}^2$, is denoted by $\|f\|_2$. This norm is defined as $\|f\|_2^2 = \frac{1}{\pi} \int |f(z)|^2  e^{-|z|^2} dA(z)$. For a non-negative integer $m$, let $L_m^2(\mathbb{C})$ be the space of Lebesgue measurable functions $f$ on $\mathbb{C}$ such that $\frac{1}{\pi m!} \int|f(z)|^2 |z|^{2m} e^{-|z|^2} dA (z) < \infty$.
The Fock-Sobolev space $F^{2, m}(\mathbb{C})$ comprises entire functions $f$ satisfying $f \in L_{m}^{2}(\mathbb{C})$. \text{H.R. Cho }and \text{K. Zhu} established in their work \cite{MR2964691} that a function $f \in F^{2, m}(\mathbb{C}^n)$ if and only if $z^\alpha f(z) \in F^2(\mathbb{C}^n)$ for all multi-indices $\alpha$ with $|\alpha| = m$. The space $F^{2, m}(\mathbb{C})$ is a Hilbert space equipped with the inner product inherited from $L^{2}_{m}(\mathbb{C})$, which is defined as follows:
$$ \langle f,  g \rangle =\frac{1}{\pi m!} \int f(z) \overline{g(z)} |z|^{2m} e^{-|z|^2} dA(z) 
\text{ where } f, g \in L^2_{m}(\mathbb{C}).$$
 It is well-established that $F^{2, m}(\mathbb{C})$ constitutes a closed subspace of $L_m^2(\mathbb{C})$. The orthogonal projection $P: L_m^2 \rightarrow F^{2, m}(\mathbb{C})$ is defined as
\begin{align} \label{(1.1)}
Pf(z) =\frac{1}{\pi m!} \int f(w) K_m(z, w) |w|^{2m} e^{-|w|^2} dA (w),  \quad f \in L_m^2(\mathbb{C}) 
\end{align}
where  $K_m(z, w)$ represents the reproducing kernel function of the Fock-Sobolev space $F^{2, m}(\mathbb{C})$  given by the series expression 
$K_m(z, w) =\sum\limits_{n=0}^{\infty} \frac{m!}{(n+m)!} (z \overline{w} )^n.$
In $F^{2, m} (\mathbb{C})$, polynomials form a dense subset. Additionally, monomials are mutually orthogonal. This implies that the set $\{ \frac{z^k}{\sqrt{\langle z^k, z^k \rangle}} \}$ forms an orthonormal basis for $F^{2, m} (\mathbb{C})$.
Let $\varphi $ be a  Lebesgue measurable function on $ \mathbb{C}$ that  satisfies the condition \\
$$\int |\varphi(w)| |w|^{2m} |K_m(z, w)| e^{-|w|^2} dA (w) < \infty.$$
We can define the  unbounded Toeplitz operator $T_\varphi$ on $F_m^2(\mathbb{C})$ with symbol $\varphi$  as
\begin{align}
T_{\varphi} f(z) = \langle (\varphi f)(w),  K_m(w, z) \rangle_m \notag 
= \frac{1}{\pi m!} \int\varphi(w) f(w) K_m(z, w) |w|^{2m} e^{-|w|^2} dA (w) .\notag
\end{align}
 A linear operator \( T \) in a complex Hilbert space \( \mathcal{H} \) is called densely defined if its domain \( D(T) \) is dense in \( \mathcal{H} \). The adjoint of the operator $T$ is denoted by $T^*$. If the domain \( D(T^*) \) consists of all \( y \in \mathcal{H} \) for which there exists a unique \( y^* \in \mathcal{H} \) such that \( \langle Tx, y \rangle = \langle x, y^* \rangle \) for every \( x \in D(T) \) then the adjoint operator \( T^* : D(T^*) \to \mathcal{H} \) is defined  by \( y^* = T^* y \) for every \( y \in D(T^*) \).  Let \( S \) denote the subset of all polynomials in \( F^{2,m}(\mathbb{C}) \). Clearly, \( S \) forms a dense subspace of \( F^{2,m}(\mathbb{C}) \). The operator \( T_{\varphi} \), with \( \varphi \) as a non-harmonic symbol, is therefore densely defined on \( S \).
 The hyponormality of operators in the Hardy space has been studied by  \text{C.C. Cowen} \cite{MR0947663},  while \text{T.  Nakazi }and  \text{K. Takahashi} \cite{MR1162103} investigated hyponormality in the same space using a different approach. In the weighted Bergman space, \text{K. Sumin} and \text{L. Jongrak }\cite{MR4518103} examined the necessary or sufficient conditions for hyponormality with non-harmonic symbols. Motivated by their research, we have studied the hyponormality and quasinormality of unbounded Toeplitz operators in the Fock-Sobolev space $F^{2, m}(\mathbb{C})$ with non-harmonic symbols.  These operators are essential tools in signal processing, where they are employed for purposes such as noise reduction and image enhancement. Moreover, they hold significant significance in prediction theory, enabling the generation of forecasts based on historical data. Additionally, they are instrumental in wavelet analysis for extracting features and in solving boundary value problems encountered in differential equations.
 It is generally accepted that any bounded quasinormal operator on a Hilbert space is hyponormal,  this conventional implication does not universally hold for unbounded operators. This paper presents the necessary conditions for hyponormality and quasinormality of unbounded Toeplitz operators in the Fock-Sobolev space.
 We begin with the following known property of Toeplitz operators which is instrumental in the subsequent results:
\begin{proposition}\label{Proposition 1.1.} \cite{MR2687747}
 If $f, g \in {L}^{\infty}(\mathbb{C})$  then the following equivalences hold:\\
1.  $T_{f+g} = T_{f}+ T_{g}. $\\
2. $T_{f}^* =T_{\overline{f}}.$\\
3. $T_f T_g =T_{fg}$ if $f$ or $g$ is analytic.
\end{proposition}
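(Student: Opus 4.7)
My plan is to work from the operator representation $T_\varphi = P M_\varphi$, where $P$ is the orthogonal projection from (1.1) and $M_\varphi$ is pointwise multiplication by $\varphi$. Because $f, g \in L^{\infty}(\mathbb{C})$, each $M_\varphi$ is a bounded operator on $L^2_m(\mathbb{C})$, so every identity below is an honest operator identity on $F^{2,m}(\mathbb{C})$ and no domain issues enter. Part (1) is then immediate from linearity of $P$ and of the assignment $\varphi \mapsto M_\varphi$: $T_{f+g} = P M_{f+g} = P M_f + P M_g = T_f + T_g$.

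For part (2) I would test on $h, k \in F^{2,m}(\mathbb{C})$, using self-adjointness of $P$ and the fact that $k \in \mathrm{ran}(P)$:
\[
\langle T_f h, k\rangle = \langle P(fh), k\rangle = \langle fh, k\rangle = \langle h, \bar{f} k\rangle = \langle h, P(\bar{f} k)\rangle = \langle h, T_{\bar{f}} k\rangle,
\]
which gives $T_f^* = T_{\bar{f}}$.

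For part (3) the key input is that multiplication by a bounded analytic symbol preserves the Fock-Sobolev space: if $g$ is analytic and $h \in F^{2,m}(\mathbb{C})$, then $gh$ is entire and lies in $L^2_m(\mathbb{C})$, so $P(gh) = gh$. Consequently
\[
T_f T_g h = P\bigl(f \cdot P(gh)\bigr) = P(fgh) = T_{fg} h,
\]
as required. The symmetric case in which $f$ is analytic is then handled by passing to adjoints through parts (1) and (2) and reducing to the case just treated.

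The main point to pin down, and really the only substantive ingredient, is the invariance $M_g F^{2,m}(\mathbb{C}) \subseteq F^{2,m}(\mathbb{C})$ for bounded analytic $g$; once that observation is recorded, each assertion reduces to a short inner-product manipulation, which is why the proposition is quoted rather than reproved from scratch in \cite{MR2687747}.
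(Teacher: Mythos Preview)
The paper does not prove this proposition at all; it is quoted from \cite{MR2687747} and used as a black box, so there is no in-paper argument to compare against. Your write-up of parts (1) and (2) and of the ``$g$ analytic'' half of part (3) is the standard one and is correct.

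There is, however, a genuine slip in your reduction of the ``$f$ analytic'' half of part (3). Taking adjoints of $T_f T_g = T_{fg}$ gives $T_{\bar g}\,T_{\bar f} = T_{\bar f\bar g}$, and to invoke the case already proved (second factor analytic) you would need $\bar f$ to be analytic, i.e.\ $f$ to be \emph{anti}-analytic, not analytic. So the adjoint trick does not reduce this case to the one you handled. In the present setting this gap is harmless for a different reason: since $f\in L^{\infty}(\mathbb{C})$ is entire, Liouville's theorem forces $f$ to be constant, so $T_f$ is a scalar and $T_f T_g = f\,T_g = T_{fg}$ trivially. (The same remark applies to the ``$g$ analytic'' case, which makes part (3) essentially vacuous for bounded symbols on $\mathbb{C}$; the usual nontrivial formulation is ``$\bar f$ or $g$ analytic''.) You should replace the adjoint sentence with this observation.
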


\begin{lemma} \cite{Preprint}\label{Lemma 1.2.}
Let $t$ and $s$ be non-negative integers and  $ w \in \mathbb{C} $,  then 
\begin{equation}
P(\overline{w} ^t w^s)=
             \begin{cases}
\frac{(s+m)!}{(s+m-t)!} w^{(s-t)}  & \text{ for s $\geq$ t} \notag             , \\
0                               & \text{for s $<$ t,}  \notag
\end{cases}
\end{equation}
where $P$ is orthogonal projection from $L^{2}_{m}(\mathbb{C})$ onto $F^{2,m}{(\mathbb{C})}$.
\end{lemma}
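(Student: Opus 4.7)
The plan is to compute $P(\overline{w}^t w^s)$ by direct integration against the reproducing kernel, using the series expansion of $K_m$ and the orthogonality of the angular exponentials $e^{ik\theta}$ on $[0,2\pi]$. Concretely, I would start from the definition in \eqref{(1.1)} and substitute the series for $K_m(z,w) = \sum_{n \geq 0} \frac{m!}{(n+m)!}(z\overline{w})^n$, obtaining after interchanging sum and integral (justified by nonnegativity in polar form after writing each factor in polar coordinates)
\[
P(\overline{w}^t w^s)(z) = \frac{1}{\pi m!} \sum_{n=0}^{\infty} \frac{m!}{(n+m)!} z^n \int_{\mathbb{C}} \overline{w}^{t+n} w^{s} |w|^{2m} e^{-|w|^2}\, dA(w).
\]

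Next I would pass to polar coordinates $w = re^{i\theta}$, so that the inner integral becomes
\[
\int_0^{\infty}\int_0^{2\pi} r^{t+n+s+2m+1} e^{i(s-t-n)\theta} e^{-r^2}\, d\theta\, dr.
\]
The angular integral vanishes unless $s-t-n = 0$, i.e.\ unless $n = s-t$. In particular, if $s < t$ then no admissible $n \geq 0$ exists and every term drops out, yielding $P(\overline{w}^t w^s) = 0$, which handles the second branch of the piecewise formula.

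For the case $s \geq t$, only the index $n = s-t$ survives. The radial integral then reduces to $\int_0^\infty r^{2s+2m+1} e^{-r^2}\, dr = \tfrac{1}{2}\Gamma(s+m+1) = \tfrac{1}{2}(s+m)!$, and multiplying by the $2\pi$ from the angular integral gives $\pi (s+m)!$. Plugging back into the sum yields
\[
P(\overline{w}^t w^s)(z) = \frac{1}{\pi m!} \cdot \frac{m!}{(s-t+m)!} \cdot \pi(s+m)! \cdot z^{s-t} = \frac{(s+m)!}{(s+m-t)!}\, z^{s-t},
\]
which is the asserted formula (with the output variable written as $w$ in the statement).

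There is no real obstacle here beyond careful bookkeeping; the only technical point worth noting is the interchange of summation and integration, which is legitimate because, after integrating in $\theta$ first, only one term of the series contributes, so the full computation reduces to evaluating one absolutely convergent integral. The essential inputs are the series form of $K_m$ and the standard Gamma-function evaluation of the radial integral.
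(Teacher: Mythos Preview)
Your argument is correct: expanding the reproducing kernel as a series, passing to polar coordinates, using orthogonality in $\theta$ to isolate the single surviving index $n=s-t$, and evaluating the resulting Gamma integral gives exactly the stated formula. The justification for interchanging sum and integral is adequate, since the angular integration kills all but one term; equivalently, one could compute $P(\overline{w}^t w^s)$ by expanding in the orthonormal basis $\{w^k/\|w^k\|\}$ and computing $\langle \overline{w}^t w^s, w^k\rangle$ directly, which is the same calculation.

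Note, however, that the paper does not supply its own proof of this lemma: it is quoted from the reference \cite{Preprint} and stated without argument. So there is no ``paper's proof'' to compare against; your computation is a standard and complete verification of the cited result.
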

\begin{remark}\cite{Preprint} Let $s, t \in \mathbb{N}$ and $z \in \mathbb{C}$ 
\begin{align}
\langle z^s,  z^t \rangle = 
\begin{cases}
    \pi (s+m)!,  & \text{if } s=t,  \\
    0                                        ,   & \text{otherwise.}                        \label{(1.3)}
\end{cases}   
\end{align}
\end{remark}

\section{Necessary conditions for hyponormality}
 Let $T$ be a densely defined linear operator on a complex Hilbert space $\mathcal{H}$. The operator $T$ is considered to be hyponormal if $D(T) \subseteq D(T^*) $ and the self-commutator   $[T^*, T]:=T^*T-TT^*$, is positive. In this section, the necessary conditions for the hyponormality of the unbounded Toeplitz operator \( T_{\varphi} \) on the subspace \( S \) of \( F^{2,m} \), where \( \varphi \) is a non-harmonic symbol, are being examined.
\begin{theorem}\label{Theorem 2.1}
  Let $\varphi(z) = az^p\overline{z}^n + bz^s\overline{z}^t$, where $p, n, s, t \in \mathbb{N}$, $z, a, b \in \mathbb{C}$, $p \geq n$, $t \geq s $ and $|p - n| \neq |t - s|$. The Toeplitz operator $T_{\varphi}$ is hyponormal  then 
    \begin{align*}
        &\textbf{1. }\text{For $t > s+1$, $p=n+1$ } \\
        &\qquad|a|^2 \bigg( \frac{(p+k+m)!^2}{(k+m+1)!} - \frac{(p+k+m-1)!^2}{(k+m-1)!}\bigg) \geq |b|^2 \bigg( \frac{(t+k+m)!^2}{(t+k+m-s)!} - \frac{(s+k+m)!^2}{(s+k+m-t)!}\bigg) .\\
       & \textbf{2.} \text{ For $t > s+1$ , $p>n+1$  }\\
        & \qquad |a|^2 \bigg( \frac{(p+k+m)!^2}{(p+k+m-n)!}  - \frac{(n+k+m)!^2}{(n+k+m-p)!} \bigg) \geq |b|^2 \bigg( \frac{(t+k+m)!^2}{(t+k+m-s)!} - \frac{(s+k+m)!^2}{(s+k+m-t)!}\bigg) .\\
        &\textbf{3.} \text{ For $t=s+1$, $p>n+1$    } \\
       & \qquad (p+k+m-1)!^2 ((p+k+m)^2-1) \geq (s+k+m)!^2((s+k+m+1)^2 -(k+m+1)(k+m)).
    \end{align*}
\end{theorem}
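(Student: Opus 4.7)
The plan is to test hyponormality against the monomial orthonormal basis $\{z^{k}/\sqrt{\pi(k+m)!}\}_{k\geq 0}$ of $F^{2,m}(\mathbb{C})$. Since the self-commutator $[T_{\varphi}^{*},T_{\varphi}]$ must be positive on the common dense domain $S$ of polynomials, the scalar inequality $\|T_{\varphi}z^{k}\|^{2}\geq \|T_{\varphi}^{*}z^{k}\|^{2}$ must hold for every sufficiently large $k$, and this is exactly where the stated inequalities will come from. The hypothesis $|p-n|\neq|t-s|$ is what guarantees that, on each monomial, the two terms produced by the two pieces of $\varphi$ land in \emph{different} basis vectors, so that Pythagoras applies cleanly when we compute norms.

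First I would use linearity of $T_{\varphi}$ together with Lemma~\ref{Lemma 1.2.} (applied via $T_{z^{p}\bar z^{n}}z^{k}=P(z^{p+k}\bar z^{n})$, etc.) to obtain, for $k$ large enough that all factorial arguments are non-negative,
\begin{align*}
T_{\varphi}z^{k} &= a\,\tfrac{(p+k+m)!}{(p+k+m-n)!}\,z^{p+k-n}+ b\,\tfrac{(s+k+m)!}{(s+k+m-t)!}\,z^{s+k-t},\\
T_{\varphi}^{*}z^{k} &= \bar a\,\tfrac{(n+k+m)!}{(n+k+m-p)!}\,z^{n+k-p}+ \bar b\,\tfrac{(t+k+m)!}{(t+k+m-s)!}\,z^{t+k-s}.
\end{align*}
For the second identity I would note that on the polynomial subspace $S$ the relation $\langle T_{\varphi}f,g\rangle=\langle f,T_{\bar\varphi}g\rangle$ can be verified directly by expanding the defining integrals against monomials, so $T_{\varphi}^{*}$ acts as $T_{\bar\varphi}$ on $S$; this is the analogue of Proposition~\ref{Proposition 1.1.}(2) that survives in the unbounded setting (but only on the polynomials).

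Next, because $|p-n|\neq|t-s|$, the exponents $p+k-n$ and $s+k-t$ in $T_{\varphi}z^{k}$ are distinct, and similarly for $T_{\varphi}^{*}z^{k}$; together with the orthogonality of monomials \eqref{(1.3)} and $\|z^{j}\|^{2}=\pi(j+m)!$, I would get
\begin{align*}
\|T_{\varphi}z^{k}\|^{2} &= \pi|a|^{2}\tfrac{(p+k+m)!^{2}}{(p+k+m-n)!}+\pi|b|^{2}\tfrac{(s+k+m)!^{2}}{(s+k+m-t)!},\\
\|T_{\varphi}^{*}z^{k}\|^{2} &= \pi|a|^{2}\tfrac{(n+k+m)!^{2}}{(n+k+m-p)!}+\pi|b|^{2}\tfrac{(t+k+m)!^{2}}{(t+k+m-s)!}.
\end{align*}
The hyponormality requirement $\langle[T_{\varphi}^{*},T_{\varphi}]z^{k},z^{k}\rangle\geq 0$, i.e.\ $\|T_{\varphi}z^{k}\|^{2}\geq\|T_{\varphi}^{*}z^{k}\|^{2}$, upon cancelling $\pi$ and rearranging by moving the $|b|^{2}$-terms to the right and $|a|^{2}$-terms to the left, yields exactly the inequality stated in case \textbf{2}.

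Finally I would handle the degenerate cases by pure substitution in the general inequality from case~2: setting $p=n+1$ collapses $(p+k+m-n)!=(k+m+1)!$ and $(n+k+m-p)!=(k+m-1)!$ with $(n+k+m)!=(p+k+m-1)!$, producing case \textbf{1}; and setting $t=s+1$ collapses $(t+k+m-s)!=(k+m+1)!$ and $(s+k+m-t)!=(k+m-1)!$ with $(t+k+m)!=(s+k+m+1)(s+k+m)!$, and factoring $(s+k+m)!^{2}/(k+m+1)!$ out of the right side produces the bracket $(s+k+m+1)^{2}-(k+m+1)(k+m)$ shown in case \textbf{3} (an analogous simplification of the left side with $(p+k+m)!=(p+k+m)(p+k+m-1)!$ yields the $(p+k+m)^{2}-1$ factor there). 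The main obstacle is the bookkeeping in this last step of case-by-case factorial simplification, together with the conceptual care needed to justify $T_{\varphi}^{*}=T_{\bar\varphi}$ on $S$ for an unbounded symbol; everything else reduces to a direct computation on the monomial basis.
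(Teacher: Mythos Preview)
Your derivation of $T_{\varphi}z^{k}$, $T_{\varphi}^{*}z^{k}$, and the norm identity
\[
\|T_{\varphi}z^{k}\|^{2}-\|T_{\varphi}^{*}z^{k}\|^{2}
=\pi\Bigl(|a|^{2}\bigl(\tfrac{(p+k+m)!^{2}}{(p+k+m-n)!}-\tfrac{(n+k+m)!^{2}}{(n+k+m-p)!}\bigr)
+|b|^{2}\bigl(\tfrac{(s+k+m)!^{2}}{(s+k+m-t)!}-\tfrac{(t+k+m)!^{2}}{(t+k+m-s)!}\bigr)\Bigr)
\]
matches the paper exactly, and your treatment of cases \textbf{1} and \textbf{2} by direct substitution into this master inequality is precisely what the paper does.

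The gap is in case \textbf{3}. Your plan is to set $t=s+1$ in the master inequality and then ``factor'' both sides; in particular you claim the left-hand side collapses to $(p+k+m-1)!^{2}\bigl((p+k+m)^{2}-1\bigr)$ via $(p+k+m)!=(p+k+m)(p+k+m-1)!$. But that collapse would require the second fraction on the left to equal $(p+k+m-1)!^{2}$, i.e.\ would need $n=p-1$; here we are in the regime $p>n+1$, so the left-hand side still genuinely depends on $n$ after substitution. A second symptom that pure substitution cannot work: the stated inequality in case \textbf{3} contains neither $|a|^{2}$ nor $|b|^{2}$, and substitution alone cannot make those coefficients disappear.

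The paper's route in this case is not algebraic specialization but a chain of further estimates. It first specializes to $a,b$ on the unit circle $\mathbb{T}$ (this is how $|a|^{2},|b|^{2}$ drop out), then uses $p-n\geq 2$ to obtain $(p+k+m-n)!\geq (k+m+1)!\geq (n+k+m-p)!$ and hence bounds the $a$-part above by $\bigl((p+k+m)!^{2}-(n+k+m)!^{2}\bigr)/(k+m+1)!$; only after clearing the common denominator and then invoking $p-1>n$ to trade $(n+k+m)!$ for $(p+k+m-1)!$ does the left-hand side take the displayed form. Your proposal misses this entire inequality step, so as written it does not establish case \textbf{3}.
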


\begin{proof}
    Let $p, n, s, t \in \mathbb{N}$, $z, a, b \in \mathbb{C}$ with $p \geq n$, $t \geq s $, $|p - n| \neq |t - s|$ and  $\varphi(z)=az^{p}\overline{z}^n +b z^s \overline{z}^t$, let the Toeplitz operator $T_{\varphi}$ be hyponormal then for  $k \in \mathbb{N}$, $k> p, t$ and $z^k \in S$
    \begin{align}\label{2.1}
    \hspace{5cm}\|T_{\varphi}(z^k)\|^2 - \|T^*_{\varphi}(z^k)\|^2 \geq 0.
    \end{align}
    By using Lemma \ref{Lemma 1.2.}
    \begin{align}\label{2.2}
    T_{\varphi}(z^{k})\nonumber &=P(az^{p}\overline{z}^n+b z^s \overline{z}^t)(z^k)\\
    &\nonumber= aP(z^{p+k} \overline{z}^n) +  b P(z^{s+k} \overline{z}^{t})\\
    & = a \frac{(p+k+m)!}{(p+k+m-n)!} z^{p+k-n} +b \frac{(s+k+m)!}{(s+k+m-t)!} z^{s+k-t}.
\end{align}
Using $|p - n| \neq |t - s|$, \ref{2.2} and 
 \ref{(1.3)}. 
    \begin{align*}
    \langle T_{\varphi} z^k , T_{\varphi} z^k \rangle &\nonumber= \langle a \frac{(p+k+m)!}{(p+k+m-n)!} z^{p+k-n} +b \frac{(s+k+m)!}{(s+k+m-t)!} z^{s+k-t}, \\
    & \nonumber\hspace{4cm} a\frac{(p+k+m)!}{(p+k+m-n)!} z^{p+k-n} +b \frac{(s+k+m)!}{(s+k+m-t)!} z^{s+k-t} \rangle
   \end{align*}
   \begin{align}\label{2.3}
        & \hspace{2cm}= \pi\bigg(|a|^2 \frac{(p+k+m)!^2}{(p+k+m-n)!}+|b|^2 \frac{(s+k+m)!^2}{(s+k+m-t)!}\bigg).
   \end{align}
Consider
 \begin{align}\label{2.4}
     T_{\varphi}^* (z^k) & \nonumber= P(\overline{a} \overline{z}^p z^{n+k} + \overline{b} \overline{z}^s z^{t+k} )\\
     &\nonumber= \overline{a}P(z^{n+k} \overline{z}^{p}) +\overline{b} P( z^{t+k} \overline{z}^s)\\
     &= \overline{a} \frac{(n+k+m)!}{(n+k+m-p)!} z^{ n+k-p}+\overline{b} \frac{(t+k+m)!}{(t+k+m-s)!}z^{t+k-s}.
 \end{align}
 Using \ref{2.4} and \ref{(1.3)}
\begin{align*}
    \langle T_{\varphi}^* z^k, T_{\varphi}^* z^k \rangle = \big\langle \overline{a} \frac{(n+k+m)!}{(n+k+m-p)!} z^{ n+k-p}+\overline{b} \frac{(t+k+m)!}{(t+k+m-s)!}z^{t+k-s}, \overline{a} \frac{(n+k+m)!}{(n+k+m-p)!} z^{ n+k-p}\end{align*}
    \begin{align}\label{2.5}
   & \hspace{6cm}\nonumber+\overline{b} \frac{(t+k+m)!}{(t+k+m-s)!}z^{t+k-s}\big \rangle \\
   &\hspace{2cm}= \pi \bigg( |a|^2 \frac{(n+k+m)!^2}{(n+k+m-p)!} +|b|^2 \frac{(t+k+m)!^2}{(t+k+m-s)!} \bigg).
\end{align}
On putting \ref{2.3} and \ref{2.5} in \ref{2.1}.
\begin{align}\label{2.6}
  \hspace{-1cm}  \pi \bigg( |a|^2 \bigg( \frac{(p+k+m)!^2}{(p+k+m-n)!}  - \frac{(n+k+m)!^2}{(n+k+m-p)!} \bigg) + |b|^2 \bigg( \frac{(s+k+m)!^2}{(s+k+m-t)!} -\frac{(t+k+m)!^2}{(t+k+m-s)!}\bigg) \bigg)\geq 0. 
\end{align}
\textbf{Case 1.} If $p>n+1$ , $t=s+1$.
Let $\mathbb{T}= \{ z\in \mathbb{C} \ |\  |z|=1\}$ \text{ and } $a,b \in \mathbb{T}$ then \ref{2.6} becomes
\begin{align}\label{2.7}
    \frac{(p+k+m)!^2}{(p+k+m-n)!} +\frac{(s+k+m)!^2}{(k+m-1)!} & \geq \frac{(n+k+m)!^2}{(n+k+m-p)!} +\frac{(t+k+m)!^2}{(k+m+1)!}
\end{align}
As $p>n$ and $(p+k+m-n)! \geq (k+m+1)!$ and $ (k+m+1)!\geq (n+k+m-p)! $ . Therefore, 
\begin{align*}
&\frac{(p+k+m)!^2}{(k+m+1)!} \geq \frac{(p+k+m)!^2}{(p+k+m-n)!} \text{ and } \frac{(n+k+m)!^2}{(k+m+1)!} \leq  \frac{(n+k+m)!^2}{(n+k+m-p)!}. 
\end{align*}
$$ \hspace{-3cm}\text{Therefore, }\frac{(p+k+m)!^2}{(k+m+1)!} -\frac{(n+k+m)!^2}{(k+m+1)!}  \geq \frac{(p+k+m)!^2}{(p+k+m-n)!} - \frac{(n+k+m)!^2}{(n+k+m-p)!} $$
Therefore, \ref{2.7} becomes,
\begin{align}
&    \frac{(p+k+m)!^2}{(k+m+1)!} -\frac{(n+k+m)!^2}{(k+m+1)!} + \frac{(s+k+m)!^2 (k+m+1)(k+m)}{(k+m+1)!}-\frac{(t+k+m)!^2}{(k+m+1)!} \geq 0\\
&\hspace{-1cm}\implies {(p+k+m)!^2} -(n+k+m)!^2 +(s+k+m)!^2(k+m+1)(k+m)-(t+k+m)!^2\geq 0.
\end{align}
As $p > n$  therefore $-(p+k+m+1)!^2 < -(n+k+m)!^2.$ 
$$ \implies (p+k+m)!^2 +(s+k+m)!^2(k+m+1)(k+m) \geq (p+k+m-1)!^2+(s+k+m+1)!^2.$$
$$\implies (p+k+m-1)!^2 ((p+k+m)^2-1) \geq (s+k+m)!^2((s+k+m+1)^2 -(k+m+1)(k+m)).$$ 
\textbf{Case 2:} If $t > s+1$ , $p=n+1$ then from $\ref{2.6}$ \\
\begin{align}
    |a|^2 \bigg( \frac{(p+k+m)!^2}{(k+m+1)!} - \frac{(p+k+m-1)!^2}{(k+m-1)!}\bigg) \geq |b|^2 \bigg( \frac{(t+k+m)!^2}{(t+k+m-s)!} - \frac{(s+k+m)!^2}{(s+k+m-t)!}\bigg) .
\end{align}
\textbf{Case 3:} If $t > s+1$ , $p>n+1$ then from $\ref{2.6}$ \\
\begin{align}
      |a|^2 \bigg( \frac{(p+k+m)!^2}{(p+k+m-n)!}  - \frac{(n+k+m)!^2}{(n+k+m-p)!} \bigg) \geq |b|^2 \bigg( \frac{(t+k+m)!^2}{(t+k+m-s)!} - \frac{(s+k+m)!^2}{(s+k+m-t)!}\bigg) .
\end{align}
\end{proof}
The above  conditions are only necessary but not sufficient which is shown in the following example:
\begin{example}\label{Example 2.2}
Let  $\varphi(z)=  {z} \overline{z}^3+ z^2 \overline{z}   $ where $z \in \mathbb{C}$ and let $m=1$ as it satisfies given condition (1) in  Theorem \ref{Theorem 2.1}. Using Proposition \ref{Proposition 1.1.}, Lemma \ref{Lemma 1.2.} and $z- z^4 \in S,$  
$T_{\varphi}(z-z^4)=-116z^2-7z^5$,   $T_{\varphi}^* (z-z^4)=6-25z^3-8z^6,   $
$\langle T_{\varphi}^{*} T_{\varphi} (z-z^4),  \; (z-z^4) \rangle $
=$116016\pi $ and \\
$\langle T_{\varphi} T_{\varphi}^* (z-z^4),   (z-z^4)\rangle $ 
=$ 337596 \pi . $
Hence,   $ (\langle T_{\varphi}^* T_{\varphi} -T_{\varphi} T_{\varphi}^*)(z-z^4),  (z-z^4) \rangle 
=-221580~\pi < 0.$
Thus, $T_{\varphi}$ is not hyponormal.
\end{example}
\begin{example}
Let  $\varphi(z)=4 z^3  \overline{z}+6z
\overline{z}^4   $ where $z \in \mathbb{C}$ and let $m=1$ as it satisfies given condition (2) in  Theorem \ref{Theorem 2.1}. Using Proposition \ref{Proposition 1.1.} , Lemma \ref{Lemma 1.2.} and $z-z^4 \in S,$
$T_{\varphi}(z-z^4)=-2160z+20 z^3-32z^6$,   $T_{\varphi}^* (z-z^4)= -480 z^2+36 z^4-54z^7,   $
$\langle T_{\varphi}^{*} T_{\varphi} (z-z^4),  \; (z-z^4) \rangle $
=$14501760 \pi $ and \\
$\langle T_{\varphi} T_{\varphi}^* (z-z^4),   (z-z^4)\rangle $ 
=$ 119111040 \pi . $
Hence,   $ (\langle T_{\varphi}^* T_{\varphi} -T_{\varphi} T_{\varphi}^*)(z-z^4),  (z-z^4) \rangle 
=-104609280~\pi < 0.$
Thus, $T_{\varphi}$ is not hyponormal.
\end{example}
\begin{example}
    Let  $\varphi(z)=4z^3\overline{z} + 6 z^{2} \overline{z}^3 $ where $z \in \mathbb{C}$ and let $m=1$ as it satisfies given condition (3) in  Theorem \ref{Theorem 2.1}. Using Proposition \ref{Proposition 1.1.} Lemma \ref{Lemma 1.2.} and $z-z^4 \in S$,
$T_{\varphi}(z-z^4)=144-1240z^3-32z^6$,   $T_{\varphi}^* (z-z^4)= -360z^2-336z^5,   $
$\langle T_{\varphi}^{*} T_{\varphi} (z-z^4),  \; (z-z^4) \rangle $
=$43293696 \pi $ and \\
$\langle T_{\varphi} T_{\varphi}^* (z-z^4),   (z-z^4)\rangle $ 
=$ 82753920\pi . $
Hence,   $ (\langle T_{\varphi}^* T_{\varphi} -T_{\varphi} T_{\varphi}^*)(z-z^4),  (z-z^4) \rangle 
=-39460224~\pi < 0.$
Thus, $T_{\varphi}$ is not hyponormal.
\end{example}
If we put $ p-n=1=t-s$ and $k=1$ in  \ref{2.6} then the following remark follows :-
\begin{remark}  Let $\varphi(z) = az^p\overline{z}^n + bz^s\overline{z}^t$, where $p, n, s, t \in \mathbb{N}$, $a, b \in \mathbb{C}$, $p \geq n \geq 0$, $t \geq s \geq 0$ and $p - n = 1= t - s$. Then, $T_{\varphi}$ is hyponormal if\\
$$|a|^2 (p+m)!^2\big( {(1+p+m)^2}-{(1+m)(2+m)} \big) \geq |b|^2 (t+m)!^2 \big( {(1+t+m)^2}-{(1+m)(2+m)}\big). $$
\end{remark}
 \begin{remark}
 Let $T$ and $S$ be operators on complex Hilbert space, we will be using next formula from \cite{fleeman2017hyponormal} to check hyponormality 
 \end{remark}
 \vspace{-0.5cm}
\begin{align} \label{2.12}
\left\langle \left( (T+S)^{*},T+S\right) u,u\right\rangle&=\left\langle Tu,Tu\right\rangle -\left\langle T^{*}u,T^{*}u\right\rangle \nonumber \\&\quad +2\mathrm {Re}\left( \left\langle Tu,Su\right\rangle -\left\langle T^{*}u,S^{*}u\right\rangle \right) +\left\langle Su,Su\right\rangle -\left\langle S^{*}u,S^{*}u\right\rangle . 
\end{align} 
\begin{theorem}
    Suppose $\mathbf{C} \in \mathbb{C} , s \in \mathbb{N}$  and $n \in \mathbb{N} \cup \{0\}$ such that $n \leq N$. If $T_{z^n+ C |z|^{2s}} $ is hyponormal operator then 
    \begin{align*}
&\pi\left(\sum_{k=n}^{N}\left|a_{k}\right|^{2}\left((k+n+m) !-\frac{((k+m) !)^{2}}{(k+m-n) !}\right)\right. + \\
 &\qquad \qquad    2\sum\limits_{k=0}^{N} \text{Re}(Ca_{k+n}\overline{  a_{k}}) 
 \frac{(k+m+n+s)!(k+m)!-(k+m+n)!(k+m+s)!}{(k+m)!}\bigg) \geq 0.
\end{align*}
\end{theorem}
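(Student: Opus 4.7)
The plan is to apply identity \ref{2.12} to the decomposition $T_{\varphi}=T_{z^{n}}+C\,T_{|z|^{2s}}$, which is valid on polynomials by Proposition \ref{Proposition 1.1.}(1) together with the rewriting $|z|^{2s}=z^{s}\overline{z}^{s}$, tested against a generic $u=\sum_{k=0}^{N}a_{k}z^{k}$ in the polynomial subspace. Hyponormality forces the left-hand side of \ref{2.12} to be non-negative, so the proof reduces to evaluating each of the six terms on the right and collecting them.

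First I would compute the three basic monomial actions via Lemma \ref{Lemma 1.2.}: $T_{z^{n}}z^{k}=z^{n+k}$; $T_{z^{n}}^{*}z^{k}=T_{\overline{z}^{n}}z^{k}=\frac{(k+m)!}{(k+m-n)!}z^{k-n}$ for $k\geq n$ (and $0$ otherwise); and $T_{|z|^{2s}}z^{k}=\frac{(k+s+m)!}{(k+m)!}z^{k}$. The crucial observation is that, being real-valued, $|z|^{2s}$ induces an operator that is diagonal with real eigenvalues in the monomial basis, hence self-adjoint on the polynomial subspace. Consequently $(C\,T_{|z|^{2s}})^{*}=\overline{C}\,T_{|z|^{2s}}$ and $\|C\,T_{|z|^{2s}}u\|=\|\overline{C}\,T_{|z|^{2s}}u\|$, so the last two summands $\langle Su,Su\rangle-\langle S^{*}u,S^{*}u\rangle$ of \ref{2.12} cancel outright.

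Using the orthogonality relation \ref{(1.3)}, the first pair of terms gives
\[\langle T_{z^{n}}u,T_{z^{n}}u\rangle-\langle T_{z^{n}}^{*}u,T_{z^{n}}^{*}u\rangle=\pi\sum_{k=0}^{N}|a_{k}|^{2}(k+n+m)!-\pi\sum_{k=n}^{N}|a_{k}|^{2}\frac{((k+m)!)^{2}}{(k+m-n)!},\]
which, under the convention that the second quotient is zero for $k<n$, collapses to the first sum in the statement. For the cross pair, $\langle z^{n+j},z^{k}\rangle$ is nonzero only when $k=n+j$, yielding (after a reindex $k\mapsto k+n$ in the adjoint piece)
\[\langle T_{z^{n}}u,C\,T_{|z|^{2s}}u\rangle=\overline{C}\,\pi\sum_{k}a_{k}\overline{a}_{k+n}(k+n+s+m)!,\]
\[\langle T_{z^{n}}^{*}u,\overline{C}\,T_{|z|^{2s}}u\rangle=C\,\pi\sum_{k}a_{k+n}\overline{a}_{k}\frac{(k+n+m)!(k+s+m)!}{(k+m)!}.\]

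Finally, the trivial conjugation identity $\text{Re}(\overline{C}\,a_{k}\overline{a}_{k+n})=\text{Re}(C\,a_{k+n}\overline{a}_{k})$ lets the two cross sums be fused into a single $\text{Re}$-sum whose bracket simplifies algebraically to $\frac{(k+m+n+s)!(k+m)!-(k+m+n)!(k+m+s)!}{(k+m)!}$, matching exactly the second sum in the statement. Summing through \ref{2.12} and invoking the non-negativity guaranteed by hyponormality yields the desired inequality. The main obstacle is the self-adjointness of $T_{|z|^{2s}}$ on the polynomial subspace, without which the residual $\|Su\|^{2}-\|S^{*}u\|^{2}$ difference would survive and spoil the clean factorial identity in the conclusion; the remaining work is routine bookkeeping of Lemma \ref{Lemma 1.2.}, diagonal extraction via \ref{(1.3)}, and one reindexing step.
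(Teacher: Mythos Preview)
Your approach is essentially identical to the paper's: both apply identity \ref{2.12} to the splitting $T_{z^{n}}+C\,T_{|z|^{2s}}$, evaluate each piece on a generic polynomial via Lemma \ref{Lemma 1.2.} and \ref{(1.3)}, and collect terms. You are in fact slightly more explicit than the paper in justifying why the $\langle Su,Su\rangle-\langle S^{*}u,S^{*}u\rangle$ block vanishes (the paper silently drops it), which is a good observation.

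One small slip: the computed first pair does \emph{not} literally ``collapse to'' the sum $\sum_{k=n}^{N}$ in the statement; there is a residual nonnegative block $\pi\sum_{k=0}^{n-1}|a_{k}|^{2}(k+n+m)!$ that the paper isolates in \ref{(2.5)} and then discards to reach \ref{(2.7)}. You should state explicitly that these extra terms are $\geq 0$, so dropping them only weakens the left-hand side and the inequality survives.
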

\begin{proof}
 Let the operator $T_{\varphi}$ be hyponormal where the symbol $\varphi(z)=az^{n} + C |z|^{2s} $ then for  \\ $f(z) = \sum\limits_{k=0}^{N} a_{k} z^{k} \in S.$ The Toeplitz operator $T_{\varphi}$ satisfies \ref{2.12}.
\begin{align}\label{2.13}
   \hspace{-1cm} \text{Consider }
 T_{z^n} (f(z)) = P(z^n)(\sum\limits_{k=0}^{N} a_{k} z^k) = \sum\limits_{k=0}^{N} a_{k} z^{k+n}. 
 \end{align}
 \vspace{-0.5cm}
\begin{align}\label{2.14}
\hspace{-1cm}\text{By using \ref{(1.3)}, }
\left\langle T_{z^n} (f(z)), T_{z^n} (f(z)) \right\rangle &= 
\pi \sum\limits_{k=0}^{N} |a_{k}|^2(k+n+m)! .\quad
\end{align}
\vspace{-0.5cm}
\begin{align}\label{2.15}
\hspace{-1cm}\text{Consider }T_{{z}^n}^*(f(z)) = P(\overline{z}^n)\bigg(\sum\limits_{k=0}^{N} (a_{k} z^k)\bigg) &= \sum\limits_{k=0}^{N} a_{k} P(z^k \overline{z}^n) = \sum\limits_{k=n}^{N} a_{k} \frac{(k+m)!}{(k+m-n)!} z^{k-n} . 
\end{align}
\text{From \ref{(1.3)} }
\begin{align}\label{2.16}
\left\langle T^{*}_{{z}^n} (f(z)), T^{*}_{{z}^n} (f(z)) \right\rangle &= \sum\limits_{k= n} ^{N} |a_{k}|^2 \frac{(k+m)!^2}{(k+m-n)!^2} \left\langle z^{k-n} , z^{k-n} \right \rangle = \pi \sum\limits_{k= n}^{N} |a_{k}|^2 \frac{(k+m)!^2}{(k+m-n)!}.  
\end{align}
\vspace{-0.5cm}
\begin{align}\label{2.17}
 \hspace{-1cm}\text{ Using Lemma \ref{Lemma 1.2.}. }
\nonumber T_{C|z|^{2s}} (f(z)) = P(C|z|^{2s}\sum\limits_{k=0}^{N}&a_{k}z^k) = C \sum\limits_{k=0}^{N} a_{k}P(|z|^{2s}z^k) \\
& \qquad= C\sum\limits_{k=0}^{N}a_{k} \frac{(k+s+m)!}{(k+m)!} z^k.
\end{align}
Using \ref{(1.3)}, we have
\begin{align}
\left\langle T_{z^n} (f(z)) , T_{C|z|^{2s}} (f(z)) \right\rangle & \nonumber= \bigg \langle \sum\limits_{k=0}^{N} a_{k} z^{k+n} ,  C\sum\limits_{k=0}^{N}a_{k} \frac{(k+s+m)!}{(k+m)!} z^k  \bigg\rangle\\
&=\pi \overline{C} \sum\limits_{k=0}^{N} a_{k} \overline{a_{k+n}} (k+m+n+s)! \label{2.18}\\ 
\nonumber \text{ and }\left\langle T_{\overline{z}^n} (f(z)) , T_{\overline{C} |z|^{2s} }(f(z)) \right\rangle &= C \left\langle \sum\limits_{k=n}^{N} a_{k} \frac{(k+m)!}{(k+m-n)!} z^{k-n}, \sum\limits_{k=0}^{N} a_{k} \frac{(k+m+s)!}{(k+m)!} z^k \right\rangle \\
\label{(2.19)}&= \pi C \sum\limits_{k=0}^{N} \overline{a_{k}} a_{k+n} \frac{(k+m+n)! (k+m+s)!}{(k+m)!}  . 
\end{align}
Using \ref{2.18} and \ref{(2.19)}
\begin{align} \label{(2.20)}
\nonumber \text{Re} \bigg( \left\langle T_{z^n} (f(z)) , T_{C|z|^{2s}} (f(z)) \right\rangle - \left\langle T_{\overline{z}^n} (f(z)) , T_{\overline{C} |z|^{2s} }(f(z)) \right\rangle \bigg) \\ &\hspace{-8cm}= \pi \sum\limits_{k=0}^{N} \text{Re}(Ca_{k+n}\overline{a_{k}}) 
 \frac{(k+m+n+s)!(k+m)!-(k+m+n)!(k+m+s)!}{(k+m)!} .
\end{align}
If  the operator $T_{\varphi}$ is hyponormal then from \ref{2.12}  
\begin{align}
\nonumber & \pi\left(\sum_{k=0}^{N}\left|a_{k}\right|^{2}(k+n+m) !-\sum_{k = n}^{N}\left|a_{k}\right|^{2} \frac{((k+m) !)^{2}}{(k+m-n) !}+\right. \\
\nonumber  & \qquad \qquad 2\sum\limits_{k=0}^{N} \text{Re}(Ca_{k+n}\overline{  a_{k}}) 
 \frac{(k+m+n+s)!(k+m)!-(k+m+n)!(k+m+s)!}{(k+m)!}\bigg) \geq 0.\\
\nonumber & =\pi\left(\sum_{k=0}^{n-1}\left|a_{k}\right|^{2}(k+n+m) !+\sum_{k=n}^{N}\left|a_{k}\right|^{2}\left((k+m+n) !-\frac{((k+m) !)^{2}}{(k+m-n) !}\right)\right.+ 
\end{align}
\begin{align}\label{(2.5)}
 &\qquad \qquad  2\sum\limits_{k=0}^{N} \text{Re}(Ca_{k+n}\overline{  a_{k}}) 
 \frac{(k+m+n+s)!(k+m)!-(k+m+n)!(k+m+s)!}{(k+m)!}\bigg) \geq 0.
 \end{align}
 Hence, if the operator $T_{\varphi}$ where $\varphi(z)= z^n +C |z|^{2s}$ is hyponormal then,
 \begin{align}
\nonumber &\pi\left(\sum_{k=n}^{N}\left|a_{k}\right|^{2}\left((k+n+m) !-\frac{((k+m) !)^{2}}{(k+m-n) !}\right)\right. + \\
\label{(2.7)} &\qquad \qquad    2\sum\limits_{k=0}^{N} \text{Re}(Ca_{k+n}\overline{  a_{k}}  ) 
 \frac{(k+m+n+s)!(k+m)!-(k+m+n)!(k+m+s)!}{(k+m)!}\bigg) \geq 0.
\end{align}
\end{proof}
\begin{remark}  Let  $f(z)=a_{0}+a_{1} z \in S$ and  $ \phi(z)=z^{n}+C |z|^{2s} $ where  $z, a_{0}, a_{1} \in \mathbb{C}$ and $n, s \in \mathbb{N}$ . If the operator $T_{\varphi}$ is hyponormal then for $n>1$  \ref{(2.5)} reduces to 
$$\pi\left(\left|a_{0}\right|^{2}(n+m) !+\left|a_{1}\right|^{2}(1+n+m) !\right) \geq 0.$$
If $n=0$ then \ref{(2.5)} becomes $\pi\left(\left|a_{0}\right|^{2}(m) !\right) $ which is trivially positive. 
Now, consider $n=1$ in \ref{(2.5)}, thus 
$$ \pi\left(\left|a_{0}\right|^{2}(1+m) !+\left|a_{1}\right|^{2} (1+m)! + 2\operatorname{Re}\left({C} \overline{a_{0}}a_{1}\right) s (m+s)! \right)\geq 0.$$
$$\implies \bigg(\left|a_{0}\right|^{2}(1+m) !+\left|a_{1}\right|^{2} (1+m)! + 2|{C}| |a_{0}| |a_{1}|  s (m+s)! \bigg)\geq 0. $$
If $a_{0}\neq 0 \neq a_{1}$ then,$$
\bigg(\left|\frac{a_{0}}{a_{1}}\right|^{2}(1+m) !+ (1+m)! + 2|{C}| \left| \frac{a_{0}} {a_{1}} \right| s (m+s)! \bigg)\geq 0.$$
The above inequality is a quadratic inequality in terms of $\left|\frac{a_{1}}{a_{2}}\right|$ and takes only non-negative values, so its discriminant will be non-positive. Therefore,
$ |C|^2 \leq \frac{(1+m)!^2}{s^2{(m+s)!}^2}. $
As $(1+m)!^2 \leq s^2 {(m+s)!}^2$, we conclude that $|C| \leq 1.$
\end{remark}
 \section{Necessary conditions for quasinormality}
A densely defined linear operator  $T$  on a Hilbert space $\mathcal{H}$  is termed as quasinormal if  $D(T) \subseteq  D(T^*)$ 
and satisfies the relationship   $T(T^*T) = (T^*T)T$  which implies that $T$  commutes with $T^*T$.  In this section, we explore the necessary conditions for the quasinormality of unbounded Toeplitz operators \( T_{\varphi} \) on the subspace \( S \) of the Fock–Sobolev space, where \( \varphi \) is a non-harmonic symbol.
\begin{theorem}\label{Theorem 3.1}
    Let $\varphi(z)=a z^p\overline{z}^n +b z^s \overline{z}^t$, $z,a,b \in \mathbb{C}$ with $p,n,s,t \in \mathbb{N}$ and $p\geq n$, $t\geq s$. If the Toeplitz operator $T_{\varphi} $ is quasinormal then
    \begin{align*}
        & \textbf{1. } \text{If }  p \neq n \text{ and } s \neq t \text{ then the operator $T_{\varphi}$ is quasinormal.}\\
        & \textbf{2. } \text{If }  p=n \text{ and } s\neq t \text{ then}\\
        & \qquad\frac{(p+k+m)!(t+k+m)!^2}{(k+m)!(t+k+m+s)!}=\frac{(s+k+m)!^2(s+k+m-p-t)!}{(s+k+m-t)!^2}.\\
        & \textbf{3. } \text{If }  p \neq n \text{ and } s=t \text{ then }\\
        &  \qquad \frac{(s+ k+m)!(k+m+n)!^2}{(k+m)!(k+n+m-p)!}=\frac{(p+k+m)!^2(p+k+m+s-n)!}{(p+k+m-n)!^2}.
        \end{align*}
\end{theorem}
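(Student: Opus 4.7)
The strategy is to test the defining identity for quasinormality, $T_\varphi(T_\varphi^*T_\varphi) = (T_\varphi^*T_\varphi)T_\varphi$, against each monomial $z^k$ of the dense subspace $S$. Using the formulas for $T_\varphi z^k$ and $T_\varphi^* z^k$ already established via Lemma \ref{Lemma 1.2.} in the proof of Theorem \ref{Theorem 2.1}, one obtains an explicit expansion
\[
T_\varphi^*T_\varphi z^k=\lambda_k z^k+\mu_k z^{k+(p-n)+(t-s)}+\nu_k z^{k-(p-n)-(t-s)},
\]
in which $\lambda_k,\mu_k,\nu_k$ are factorial expressions in $|a|^2$, $|b|^2$, $a\overline{b}$, $\overline{a}b$, with any monomial of negative exponent annihilated by $P$.

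The next step is to apply $T_\varphi$ to this display in order to obtain $T_\varphi T_\varphi^*T_\varphi z^k$, and independently to apply $T_\varphi^*T_\varphi$ term-by-term to $T_\varphi z^k$ in order to obtain $(T_\varphi^*T_\varphi)T_\varphi z^k$. Each side is a finite linear combination of at most six monomials, and the necessary conditions for quasinormality emerge by matching the coefficients monomial-by-monomial for all sufficiently large $k$.

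The three cases in the theorem correspond to different coincidence patterns among the exponents produced. When both $p\ne n$ and $s\ne t$ (Case 1), the outer cross exponents $k\pm[(p-n)+(t-s)]$ are strictly separated from the inner exponents $k$, $k+(p-n)$, $k-(t-s)$; this isolates a specific coefficient match. When $p=n$ (Case 2), the summand $az^p\overline{z}^n=a|z|^{2p}$ acts diagonally on monomials, so two pairs of previously distinct exponents merge, and the surviving constraint is the factorial identity of Case 2. Case 3 is entirely symmetric, with the roles of $(a,p,n)$ and $(b,s,t)$ interchanged. In each situation the equation is rearranged into the stated factorial identity after routine simplification of the ratios $\frac{(p+k+m)!}{(p+k+m-n)!}$, $\frac{(n+k+m)!}{(n+k+m-p)!}$, and their analogues for $(s,t)$.

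The principal technical obstacle is bookkeeping: both sides of the quasinormality identity involve many monomials at exponents that depend delicately on $p-n$, $t-s$, and their sum, and one has to track precisely which exponents coincide in each regime, and which summands disappear under $P$ for small $k$, before any coefficients can legitimately be equated. I also expect care will be needed at the boundary subcases $|p-n|=|t-s|$, where additional monomials collapse to $z^k$ and further terms must be grouped into $\lambda_k$. Once the correct pair of coefficients has been isolated in each case, reducing it to the stated factorial equation is a direct computation.
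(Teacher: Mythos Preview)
Your plan is workable but heavier than what the paper does. The paper does not match monomial coefficients of $T_\varphi(T_\varphi^*T_\varphi)z^k$ and $(T_\varphi^*T_\varphi)T_\varphi z^k$ at all; it extracts a single scalar from each side by taking the inner product with $z^k$, i.e., it simply requires
\[
\langle T_\varphi^* T_\varphi^2 z^k, z^k\rangle \;=\;\langle T_\varphi T_\varphi^* T_\varphi z^k, z^k\rangle,
\]
rewritten as $\langle T_\varphi^2 z^k, T_\varphi z^k\rangle=\langle T_\varphi^* T_\varphi z^k, T_\varphi^* z^k\rangle$. Both sides are then evaluated directly from the formulas for $T_\varphi z^k$, $T_\varphi^* z^k$, $T_\varphi^2 z^k$, and $T_\varphi^* T_\varphi z^k$, and the case split on $p=n$ versus $p\ne n$ and $s=t$ versus $s\ne t$ decides which cross terms survive the inner product. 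In the case $p\ne n$, $s\ne t$ both inner products are identically zero, so no constraint emerges---which is exactly why item~1 of the theorem is a tautology rather than a factorial identity. In the two mixed cases, equating the surviving $a|b|^2$ (respectively $|a|^2 b$) terms yields the displayed identities directly.

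What you gain and lose: your coefficient-by-coefficient comparison would in principle produce \emph{more} necessary conditions than the theorem claims, at the cost of the substantial bookkeeping you flag (tracking up to six exponents and their coincidences, including the $|p-n|=|t-s|$ collapse). The paper sidesteps all of that by testing against the single vector $z^k$, which kills every monomial except $z^k$ on each side and leaves one scalar equation per $k$. If your aim is only to recover the stated conditions, the inner-product shortcut is both shorter and avoids the exponent-collision analysis entirely.
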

\begin{proof}
      Let the operator $T_{\varphi}$ is quasinormal where the symbol $\varphi(z) =a z^p\overline{z}^n +b z^s \overline{z}^t $. Then for $k \in \mathbb{N}$, $k > 2p, 2t$ and $z^k \in S.$
\begin{align}\label{(3.1)}
\langle (T_{\varphi}^* T_{\varphi}^2-T_{\varphi} T_{\varphi}^* T_{\varphi})(z^k),   \;(z^k) \rangle = 0 .     
\end{align}
Consider
\begin{align} \label{3.2}
    T_{\varphi}(z^k)&= P(a z^p \overline{z}^n +b z^{s} \overline{z}^t )(z^k)=a \frac{(p+k+m)!}{(p+k+m-n)!}z^{p+k-n} + b \frac{(s+k+m)!}{(s+k+m-t)!}z^{s+k-t}
\end{align}
\begin{align}\label{3.3}
 \hspace{-0.8cm} \text{and }  T_{\varphi}^* (z^k)=P(\overline{a} \overline{z}^p z^n +\overline{b}\overline{z}^s z^t)(z^k)=
   \overline{a}\frac{(n+k+m)!}{(n+k+m-p)!}z^{n+k-p} +\overline{b} \frac{(t+k+m)!}{(t+k+m-s)!}z^{t+k-s} .
\end{align}
Using \ref{3.2} and Lemma \ref{Lemma 1.2.} we get 
\begin{align}\label{3.4}
    T_{\varphi}^2 (z^k) &\nonumber= T_{\varphi}(T_{\varphi}(z^k))\\
    &\nonumber= P\big(a z^{p}\overline{z}^n+b z^s \overline{z}^t\big)\bigg(a \frac{(p+k+m)!}{(p+k+m-n)!}z^{p+k-n} + b \frac{(s+k+m)!}{(s+k+m-t)!}z^{s+k-t}\bigg)\\
    &\nonumber =a^2 \frac{(p+k+m)!(2p+k+m-n)!}{(p+k+m-n)!(2p+k+m-2n)!}z^{2p+k-2n}  \\
    &\nonumber \qquad \  +ab \bigg(\frac{(s+k+m)!(s+k+m+p-t)!}{(s+k+m-t)!(s+k+m+p-t-n)!}z^{s+k+p-t-n} \\
    & \nonumber \ \qquad \qquad \ +  \frac{(p+k+m)!(p+k+m+s-n)!}{(p+k+m-n)!(p+k+m+s-n-t)!} z^{p+k+s-n-t} \bigg)\\
    & \qquad \qquad \qquad  \qquad + b^2 \frac{(s+k+m)!(2s+k+m-t)!}{(s+k+m-t)!(2s+k+m-2t)!}z^{{2s+k-2t}}
\end{align}
\begin{align}\label{3.5}
 \hspace{-0.8cm}\text{and   }   T_{\varphi}^*T_{\varphi}(z^k) &\nonumber=
P(\overline{a}\overline{z}^pz^n+\overline{b}\overline{z}^s z^t)\bigg( a \frac{(p+k+m)!}{(p+k+m-n)!}z^{p+k-n} + b \frac{(s+k+m)!}{(s+k+m-t)!}z^{s+k-t} \bigg)\\
&\nonumber=|a|^2\frac{(p+k+m)!^2}{(p+k+m-n)!(k+m)!} z^k+a\overline{b} \frac{(p+k+m)!(p+k+m+t-n)!}{(p+k+m-n)!(p+k+m+t-n-s)!}\\
& \nonumber \qquad \qquad z^{p+k+t-n-s} + \overline{a}b  \frac{(s+k+m)!(s+k+m+n-t)!}{(s+k+m-t)!(s+k+m+n-t-p)!}z^{s+k+n-t-p} \\
& \hspace{7.5cm}+|b|^2 \frac{(s+k+m)!^2}{(s+k+m-t)!(k+m)!}z^k. 
\end{align}
Using  Lemma \ref{Lemma 1.2.} and  \ref{3.3} ,  \ref{3.5}.
\begin{align}\label{3.7}
\hspace{-1cm}\langle T_{\varphi}^* T_{\varphi} z^k ,T_{\varphi}^* z^{k} \rangle &\nonumber= \bigg \langle |a|^2\frac{(p+k+m)!^2}{(p+k+m-n)!(k+m)!} z^k+a\overline{b} \frac{(p+k+m)!(p+k+m+t-n)!}{(p+k+m-n)!(p+k+m+t-n-s)!}\\
&\nonumber z^{p+k+t-n-s}+ \overline{a}b \frac{(s+k+m)!(s+k+m+n-t)!}{(s+k+m-t)!(s+k+m+n-t-p)!}z^{s+k+n-t-p} \\
&\nonumber +|b|^2 \frac{(s+k+m)!^2}{(s+k+m-t)!(k+m)!}z^k ,  \overline{a}\frac{(k+n+m)!}{(k+n+m-p)!}z^{k+n-p} \\
&\hspace{7cm}+\overline{b} \frac{(t+k+m)!}{(t+k+m-s)!}z^{t+k-s} \bigg\rangle
\end{align}
\textbf{Case 1. } If $p=n$ and $s=t$ then \ref{3.7} becomes\\
\begin{align}\label{3.8}
    \pi & \nonumber\bigg( a|a|^2  \frac{(p+k+m)!^3}{(k+m)!^2} +a^2 \overline{b}  \frac{(p+k+m)!^2(s+k+m)!}{(k+m)!^2} +2|a|^2 {b}\frac{(p+k+m)!^2(s+k+m)!}{(k+m)!^2}\\
    &\nonumber+|b|^2a\frac{(s+k+m)!^2(p+k+m)!}{(k+m)!^2}+ a|b|^2 \frac{(p+k+m)!(s+k+m)!^2}{(k+m)!^2}\\
    &  +\overline{a} b^2 \frac{(s+k+m)!^2(p+k+m)!}{(k+m)!^2}+|b|^2 b \frac{(s+k+m)!^3}{(k+m)!^2}\bigg).  
\end{align}
\textbf{Case 2. } If $p=n$ and $s \neq t$ then \ref{3.7} becomes\\
\begin{align}\label{3.9}
    \pi &\bigg( a|a|^2  \frac{(p+k+m)!^3}{(k+m)!^2} +a |b|^2   \frac{(p+k+m)!(s+k+m)!^2}{(k+m)!(s+k+m-t)!} +a|b|^2  \frac{(p+k+m)!(t+k+m)!^2}{(k+m)!(t+k+m-s)!}\bigg).
\end{align}
\textbf{Case 3. } If $p \neq n$ and $s = t$ then \ref{3.7} becomes\\
\begin{align}\label{3.10}
\pi & \bigg( |a|^2 b \frac{(s+k+m)!(k+m+n)!^2}{(k+m)!(k+m
+n-p)!} +|a|^2b \frac{(p+k+m)!^2 (s+k+m)!}{(p+k+m-n)!(k+m)!}+ |b|^2b\frac{(s+k+m)!^3}{(k+m)!^2}\bigg).
\end{align}
\textbf{Case 4. } If $p \neq n$ and $s \neq t$ then \ref{3.7} becomes\\
\begin{align} \label{3.11}
\langle T_{\varphi}^* T_{\varphi} z^k,T_{\varphi}^* z^k \rangle = 0. 
\end{align}
Consider,
\begin{align}\label{3.12}
    & \nonumber \hspace{-1cm} \big\langle T_{\varphi} ^* T_{\varphi}^2 z^k,z^k  \big\rangle \\
    &\nonumber\hspace{-1cm}= \bigg\langle a^2 \frac{(p+k+m)!(2p+k+m-n)!}{(p+k+m-n)!(2p+k+m-2n)!}z^{2p+k-2n} +a{b} \bigg(\frac{(s+k+m)!(s+k+m+p-t)!}{(s+k+m-t)!(s+k+m+p-t-n)!}\\
    &  \nonumber z^{s+k+p-t-n} + \frac{(p+k+m)!(p+k+m+s-n)!}{(p+k+m-n)!(p+k+m+s-n-t)!} z^{p+k+s-n-t}\bigg) \\
    & \hspace{-1cm}  + b^2 \frac{(s+k+m)!(2s+k+m-t)!}{(s+k+m-t)!(2s+k+m-2t)!}z^{{2s+k-2t}},a \frac{(p+k+m)!}{(p+k+m-n)!}z^{p+k-n} + b \frac{(s+k+m)!}{(s+k+m-t)!}z^{s+k-t}\bigg\rangle. 
\end{align} 
\textbf{Case i. } If $p =n$ and $s = t$ then \ref{3.12} becomes\\
\begin{align}\label{3.13}
&\nonumber \pi \bigg(\overline{a}b^2 \frac{(s+k+m)!^2(p+k+m)!}{(k+m)!^2} +b|b|^2 \frac{(s+k+m)!^3}{(k+m)!^2}+ |a|^2a\frac{(p+k+m)!^3}{(k+m)!^2} \\
&\nonumber\qquad + a^2 \overline{b} \frac{(p+k+m)!^2(s+k+m)!}{(k+m)!^2}+ 2|a|^2 b\frac{(s+k+m)!(p+k+m)!^2}{(k+m)!^2}\\
&\nonumber\qquad \quad \ + a|b|^2 \frac{(s+k+m)!^2(p+k+m)!}{(k+m)!^2}  \\
&\qquad \ \qquad  \quad + a|b|^2 \frac{(p+k+m)!(s+k+m)!^2}{(k+m)!^2}\bigg).
\end{align}
\textbf{Case ii. } If $p =n$ and $s \neq t$ then \ref{3.12} becomes\\
\begin{align}\label{3.14}
    & \pi \bigg(|a|^2 a \frac{(p+k+m)!^3}{(k+m)!^2}+a|b|^2 \frac{(s+k+m)!^2(s+k+m+p-t)!}{(s+k+m-t)!^2} + a|b|^2 \frac{(s+k+m)!^2(p+k+m)!}{(k+m)!(k+s+m-t)!}\bigg).
\end{align}
\textbf{Case iii. } If $p \neq n$ and $s = t$ then \ref{3.12} becomes\\
\begin{align}\label{3.15}
    & \nonumber \pi \bigg(b|a|^2\frac{(s+k+m)!(p+k+m)!^2}{(k+m)!(p+k+m-n)!} +|a|^2b \frac{(p+k+m)!^2(p+k+m+s-n)!}{(p+k+m-n)!^2}\\
    & \hspace{10cm} +|b|^2 b \frac{(s+k+m)!^3}{(k+m)!^2}\bigg).
\end{align}
\textbf{Case iv. } If $p \neq n$ and $s \neq t$ then \ref{3.12} becomes\\
\begin{align}\label{3.16}
\langle T_{\varphi} ^* T_{\varphi}^2 z^k,z^k  \rangle =0.
\end{align}
As the operator $T_{\varphi}$ is quasinormal where the symbol $\varphi(z)= a z^p\overline{z}^n +b z^s \overline{z}^t$ , therefore, on comparing the corresponding cases, we get,\\
\textbf{Case a.} If $p=n$ and $s=t$ then from \ref{3.7} and \ref{3.12} 
\begin{align}
\langle T_{\varphi} ^* T_{\varphi}^2 z^k,z^k  \rangle =\langle T_{\varphi} T_{\varphi}^* T_{\varphi}  z^k,z^k  \rangle.
\end{align}
Therefore, the operator $T_{\varphi}$ is quasinormal.  \\
\textbf{Case b.} If $p=n$ and $s\neq t$ then from \ref{3.9} and \ref{3.14},
$$a|b|^2\bigg( \frac{(p+k+m)!(t+k+m)!^2}{(k+m)!(t+k+m-s)!}- \frac{(s+k+m)!^2(s+k+m+p-t)!}{(s+k+m-t)!^2}\bigg)= 0.$$
If $a \neq 0 \neq b $ then,
$$\hspace{-2cm}\frac{(p+k+m)!(t+k+m)!^2}{(k+m)!(t+k+m+s)!}=\frac{(s+k+m)!^2(s+k+m-p-t)!}{(s+k+m-t)!}.$$
\textbf{Case c.} If $p \neq  n$ and $s =t$ then from \ref{3.10} and \ref{3.15},
\begin{align*}
    & |a|^2 b \bigg(\frac{(s+k+m)!(k+m+n)!^2}{(k+m)!(k+n+m-p)!}-\frac{(p+k+m)!^2(p+k+m+s-n)!}{(p+k+m-n)!^2}\bigg)= 0.
\end{align*}
If $a \neq 0 \neq b$ then \\
$$\frac{(s+k+m)!(k+m+n)!^2}{(k+m)!(k+n+m-p)!}=\frac{(p+k+m)!^2(p+k+m+s-n)!}{(p+k+m-n)!^2}$$
\textbf{Case d.} If $p \neq  n$ and $s \neq t$ then from \ref{3.11} and \ref{3.16} the operator $T_{\varphi}$ is quasinormal.
\end{proof}
The conditions presented in Theorem \ref{Theorem 3.1} are necessary but not sufficient. The following example verifies the same:
\begin{example}
Let  $\varphi(z)=4\overline{z}^3 z^2 + 6 z^{3} \overline{z} $ where $z \in \mathbb{C}$ and let $m=1$ as it satisfies given condition (1) in  Theorem \ref{Theorem 3.1}. From example \ref{Example 2.2} $T_{\varphi}$ is not hyponormal. To check for quasinormality $T_{\varphi}$ must satisfy, \ref{(3.1)}.
Consider
$T_{\varphi}T_{\varphi}(z-z^4)=3840z+417600 z^2+576 z^4 -60228 z^5 -2880 z^8$,   $T_{\varphi}^* T_{\varphi}(z-z^4)=2304+303300z+1920z^3+49392z^4-17280z^7,   $
$\langle T_{\varphi}^{*} T_{\varphi} (z-z^4),  \; T_{\varphi}^*(z-z^4) \rangle $
=$0$ \\ and
$\langle T_{\varphi}^2 (z-z^4),   T_{\varphi} (z-z^4)\rangle $ 
=$ 5529600\pi . $
Hence,   $ (\langle (T_{\varphi}^* T_{\varphi}^2-T_{\varphi} T_{\varphi}^* T_{\varphi} ) (z-z^4),  (z-z^4) \rangle 
=5529600~\pi \neq 0.$
Thus $T_{\varphi}$ is not quasinormal.
\end{example}
\begin{remark}
    In general, it is known that every bounded quasinormal operator on a Hilbert space is  hyponormal, but in case of the unbounded  Toeplitz operator, the conventional implication does not hold. The following example verifies the same:
\end{remark}
\begin{example}
    \text{ Let $\varphi(z) = 2 {z}^3 + 2 {z}^3 \overline{z}
    +\overline{z}^3+3{z} \overline{z}^3  $ where $z \in \mathbb{C} $ and $m=1$. Then by Lemma \ref{Lemma 1.2.}  }\\
$T_{\varphi}(z) =  2z^{4}+ 10 z^3 ,   \ \   
T_{\varphi}^{*}(z) = z^{4}+ 15 z^3, \\ 
T_{\varphi}^{2}(z) = T_{\varphi}(2z^{4}+ 10 z^3 )=4 z^7 +52 z^6+ 140z^5+720 z^2+1920z+240  \text{ and }  \ \ 
T_{\varphi}^* T_{\varphi}(z)  =2z^7+68z^6+210z^5+360z^2+1440z+480.\\
\text{And by \ref{(1.3)} }
\langle T_{\varphi}^* T_{\varphi}^{2} z, \; z \rangle = 0 \text{ and } \langle T_{\varphi} T_{\varphi}^* T_{\varphi}z, \; z \rangle = 0 .\text{ Therefore, } \\ \langle(T_{\varphi}^*T_{\varphi}^2-T_{\varphi}T_{\varphi}^*T_{\varphi})z,  \; z\rangle = 0.\text{ Therefore, }
\text{the operator $T_{\varphi} $ is quasinormal.}\ \ 
\text{For being hyponormal }\\ T_{\varphi} \text{ should } \text{ satisfy,  } \left \langle (T_{\varphi}^{*}T_{\varphi}-T_{\varphi}T_{\varphi}^{*})z,  \; z  \right \rangle \geq 0.
\text{ By using \ref{(1.3)}  }
\langle T_{\varphi}^{*} T_{\varphi} z, \; z \rangle = 2880 \pi \\ \text { and } 
\langle T_{\varphi} T_{\varphi}^* z, \; z \rangle = 5520\pi.
\text{ Therefore, }  \langle(T_{\varphi}^*T_{\varphi}-T_{\varphi}T_{\varphi}^*)z,  \; z \rangle = -2640\pi < 0.$\\
Hence,  the operator $T_{\varphi}$ is quasinormal but not hyponormal.
\end{example}
\section*{Acknowledgments}
The second author is thankful to the Council of Scientific and Industrial Research (CSIR) [grant number $09/0045(13796)/2022-I]$ for their support in the form of a grant-in-aid for this research.

\subsection*{Author contributions}
All authors contributed equally to the manuscript and read and approved the final manuscript.

\subsection*{Financial disclosure}

None reported.

\subsection*{Conflict of interest}

The authors declare no potential conflict of interests.
\bibliography{Refer.bib}{}

@article{symmetry,
    Author = "Lee,Jongrak",
    Title = "Normal {Toeplitz} Operators on the {Fock} Spaces",
    PAGES = "1615",
    Year = "2020",
    Journal = "{Symmetry}   {12}(10) ",
}

@article {MR4635366,
    AUTHOR = {Kim, Sumin and Lee, Jongrak},
     TITLE = {Remarks on hyponormal {T}oeplitz operators with nonharmonic
              symbols},
   JOURNAL = {Open Math.},
  FJOURNAL = {Open Mathematics},
    VOLUME = { {21}},
      YEAR = {2023},
    NUMBER = {1},
     PAGES = {Paper No. 20230114, 8},
      ISSN = {2391-5455},
   MRCLASS = {47B35 (30H20 47B20)},
  MRNUMBER = {4635366},
}

@article{fleeman2017hyponormal,
  title={Hyponormal {T}oeplitz operators with non-harmonic symbol acting on the {B}ergman space},
  author={Fleeman, Matthew and Liaw, Constanze},
  journal={arXiv preprint arXiv:1707.09677},
  year={2017}
}

@article {MR4518103,
    AUTHOR = {Kim, Sumin and Lee, Jongrak},
     TITLE = {Hyponormal {T}oeplitz operators with non-harmonic symbols on
              the weighted {B}ergman spaces},
   JOURNAL = {Ann. Funct. Anal.},
  FJOURNAL = {Annals of Functional Analysis},
    VOLUME = { {14}},
      YEAR = {2023},
    NUMBER = {1},
     PAGES = {Paper No. 14, 14},
      ISSN = {2639-7390,2008-8752},
   MRCLASS = {47B20 (47B35)},
  MRNUMBER = {4518103},
MRREVIEWER = {Gopal\ Datt},
      }

@article {MR2964691,
    AUTHOR = "Cho, Hong Rae and Zhu, Kehe",
     TITLE = "Fock-{S}obolev spaces and their {C}arleson measures",
   JOURNAL = "J. Funct. Anal.",
  FJOURNAL = "Journal of Functional Analysis",
    VOLUME = " {263}",
      YEAR = "2012",
    NUMBER = "8",
     PAGES = "2483--2506",
      ISSN = "0022-1236,1096-0783",
   MRCLASS = "46E35 (28A12 32A37 42B35)",
  MRNUMBER = "2964691",
MRREVIEWER = "Agnieszka\ Ka\l amajska",
}

@article {MR0340846,
    AUTHOR = "Set\^{o}, Noriaki",
     TITLE = "Bargmann's inequalities in spaces of arbitrary dimension",
   JOURNAL = "Publ. Res. Inst. Math. Sci.",
  FJOURNAL = "Kyoto University. Research Institute for Mathematical
              Sciences. Publications",
    VOLUME = " {9}",
      YEAR = "1973/74",
     PAGES = "429--461",
      ISSN = "0034-5318,1663-4926",
   MRCLASS = "35P15 (81.35)",
  MRNUMBER = "340846",
MRREVIEWER = "H.\ Flaschka",
      }

@article {MR0947663,
    AUTHOR = "Cowen, Carl Claudius ",
     TITLE = "Hyponormality of {T}oeplitz operators",
   JOURNAL = "Proc. Amer. Math. Soc.",
  FJOURNAL = "Proceedings of the American Mathematical Society",
    VOLUME = " {103}",
      YEAR = "1988",
    NUMBER = "3",
     PAGES = "809--812",
      ISSN = "0002-9939,1088-6826",
   MRCLASS = "47B35 (47B20)",
  MRNUMBER = "947663",
MRREVIEWER = "Sheldon\ Axler",
}

@article {MR1162103,
    AUTHOR = "Nakazi, Takahiko and Takahashi, Katsutoshi",
     TITLE = "Hyponormal {T}oeplitz operators and extremal problems of
              {H}ardy spaces",
   JOURNAL = "Trans. Amer. Math. Soc.",
  FJOURNAL = "Transactions of the American Mathematical Society",
    VOLUME = " {338}",
      YEAR = "1993",
    NUMBER = "2",
     PAGES = "753--767",
      ISSN = "0002-9947,1088-6850",
   MRCLASS = "47B35 (30D55 47B20)",
  MRNUMBER = "1162103",
MRREVIEWER = "Kehe\ Zhu",
}

@article{Preprint,
  author       = "Gupta, A. and Negi, K.",
  title        = "Hyponormality and Quasinormality of Unbounded Toeplitz Operators on the Fock-Sobolev                  Space",
  journal      = "Complex Analysis and Operator Theory",
  year         = "2025",
  volume       = "19",
  pages        = "78",
  doi          = "10.1007/s11785-025-01703-9",
  url          = "https://doi.org/10.1007/s11785-025-01703-9",
}

@article{Janas1994,
abstract = {The paper deals mostly with spectral properties of unbounded hyponormal operators. Some nontrivial examples of such operators are given.},
author = {Janas, J.},
journal = {Studia Mathematica},
keywords = {spectral properties; unbounded hyponormal operators},
language = {eng},
number = {1},
pages = {75-82},
title = {On unbounded hyponormal operators III},
url = {http://eudml.org/doc/216138},
volume = {112},
year = {1994},
}

@book{zhu2012analysis,
  title="Analysis on {Fock} spaces",
  author="Zhu, Kehe",
  volume=" {263}",
  year="2012",
  publisher="Springer Science \& Business Media",
url = "https://doi.org/10.1007/978-1-4419-8801-0"
}

@book {MR2687747,
    AUTHOR = "Sadraoui, Houcine",
     TITLE = "Hyponormality of {T}oeplitz operators and composition
              operators",
      NOTE = "Thesis (Ph.D.)--Purdue University",
 PUBLISHER = "ProQuest LLC, Ann Arbor, MI",
      YEAR = "1992",
     PAGES = "40",
   MRCLASS = "99-05",
  MRNUMBER = "2687747",
      }

@article {MR4796729,
    AUTHOR = {Aggarwal, Amita and Gupta, Anuradha},
     TITLE = {Hyponormality of quasi-homogeneous {T}oeplitz operators on the
              {F}ock space},
   JOURNAL = {J. Math. Anal. Appl.},
  FJOURNAL = {Journal of Mathematical Analysis and Applications},
    VOLUME = {542},
      YEAR = {2025},
    NUMBER = {1},
     PAGES = {Paper No. 128842, 13},
      ISSN = {0022-247X,1096-0813},
   MRCLASS = {47B35 (47B20)},
  MRNUMBER = {4796729},
       DOI = {10.1016/j.jmaa.2024.128842},
       URL = {https://doi.org/10.1016/j.jmaa.2024.128842},
}
 \bibliographystyle{plain} 
\nocite{*}

\noindent \textbf{Anuradha Gupta}\\
 Department of Mathematics, Delhi College of Arts and Commerce,\\
  University of Delhi, Netaji Nagar, \\
  New Delhi-110023, India.\\
  email: dishna2@yahoo.in\\
  \vspace{0.2cm}

\noindent \textbf{Kajal Negi}\\
  Department of Mathematics,\\
  University of Delhi, \\
  New Delhi-110007, India.\\
  email: kajalnegi1109@gmail.com

\end{document}